\renewcommand{\d}{\,\mathrm{d}}
\newcommand{\p}{\mathbb{P}}
\newcommand{\E}{\mathbb{E}}    
\newcommand{\R}{\mathbb{R}}    
  \newcommand{\id}{\mathds{1}}
\newcommand{\cD}{\mathcal{D}}
\theoremstyle{plain}
\newtheorem{theorem}{Theorem} 
\newtheorem{corollary}{Corollary}
\theoremstyle{definition}
\newtheorem{definition}{Definition}
\theoremstyle{remark}
\newtheorem{remark}{Remark}
 \renewcommand{\cite}{\citet}  
\title{Elementary proofs of several results on false discovery rate} 
\author{
  Ruodu Wang\thanks%
  {Department of Statistics and Actuarial Science,
  University of Waterloo,
  Waterloo, Ontario, Canada.
  E-mail: \href{mailto:wang@uwaterloo.ca}{wang@uwaterloo.ca}.}}
\begin{document}

\maketitle

\begin{abstract}
We collect self-contained elementary proofs of four results in the literature on the false discovery rate  of the Benjamini-Hochberg (BH) procedure for independent or positive-regression dependent p-values, the Benjamini-Yekutieli correction for arbitrarily dependent p-values, and the e-BH procedure for arbitrarily dependent e-values. As a corollary, the above proofs also lead to some inequalities of Simes and Hommel.
\end{abstract}
 
 \section{False discovery rate} 

This short note contains proofs of four   results  on controlling the false discovery rate (FDR) using the Benjamini-Hochberg (BH), Benjamini-Yekutieli (BY) and e-BH procedures proposed by, respectively, \cite{BH95}, \cite{BY01} and \cite{WR22}. 
All results are known and   various proofs exist in the literature. Our proofs are elementary, concise,  and self-contained. They may not be useful for experts in the field, but they may become helpful for someone who is trained in probability theory but not much in multiple hypothesis testing (e.g., a graduate student). This note serves pedagogical purposes and does not contain any new results.

We first briefly define false discovery rate in multiple hypothesis testing.
Let $H_1,\dots,H_K$ be $K$ hypotheses, and  denote by $\mathcal K=\{1,\dots,K\}$.
The true data-generating  probability measure is denoted by $\p$.
For each $k\in \mathcal K$,  $H_k$ is called a true null hypothesis if $\p \in H_k$, and 
the set $\mathcal N\subseteq \mathcal K$ be the set of indices of true null hypotheses, which is unknown to the decision maker. Let $K_0=|\mathcal N|$ be the number of true null hypotheses.
A \emph{p-variable} $P$ is a  random variable  that satisfies $\p(P\le \alpha)\le \alpha$ for all $\alpha \in (0,1)$.  
 For each $k\in \mathcal K$,  
  $H_k$ is  associated with p-value $p_k$, which is a realization of a random variable $P_k$, where $P_k$ is a p-variable if $k\in \mathcal N$.
For convenience, we will use the term  ``p-value"  for  both the random variable  $P_k$ and its realized value. We always write $\mathbf P=(P_1,\dots,P_K)$, with the capital letters emphasizing that they are random variables. We also encounter e-values (\cite{VW21}) which will be defined in Section \ref{sec:4}.

 Let $\cD:[0,\infty)^K\to 2^{\mathcal K}$ be a \emph{testing procedure}, which reports the indices of rejected hypotheses based on observed p-values.  The general aim is to reject non-null hypotheses among $H_1,\dots,H_K$.
Write  $R_{\cD}:=|\cD|$, the number of discoveries (rejected hypotheses), and $F_{\cD}:=|\cD \cap \mathcal N|$, the number of false discoveries and $R_{\cD}$ is  
 The ratio   $F_{\cD}/R_{\cD}$ is called the false discovery proportion (FDP) with the convention $0/0=0$. 
  The quantities $\cD$, $F_{\cD}$ and $R_{\cD}$ are treated as random objects as they are functions of the p-values. 
   \cite{BH95} proposed to control FDR defined by
  $ 
 \mathrm{FDR}_{\cD}:=\E [ {F_{\cD}}/({R_{\cD} \vee 1}) ],
$ 
where the expected value is taken  under  $\p$.

The assumption on the p-values  or  the e-values  varies by sections. In Section \ref{sec:2}, null p-values are iid uniform and independent of the non-null p-values.
In Section \ref{sec:3}, p-values satisfy a notion of positive dependence. In Sections \ref{sec:4} and \ref{sec:5}, we deal with arbitrarily dependent e-values and p-values, respectively. 
   
 \section{The BH procedure for iid null p-values}\label{sec:2}

For $k\in \mathcal K$, let $p_{(k)}$ be the $k$-th order statistics of $p_1,\dots,p_K$, from the smallest to the largest. 
The \emph{BH procedure}  at level $\alpha \in (0,1)$ rejects all hypotheses with the smallest $k^*$ p-values,
where 
$$
k^*=\max\left\{k\in \mathcal K: \frac{K p_{(k)}}{k} \le \alpha\right\},
 $$
with the convention $\max(\varnothing)=0$, 
and accepts the rest. 
 
 We provide three proofs of  the main result of \cite{BH95}. 
 The first one uses an argument  based on the optional stopping theorem. This proof was provided by \citet[Chapter 5]{S02}.
The second one is based on a replacement technique. The third one uses the argument from the proof of Theorem \ref{th:BH-PRDS} below as explained in Remark \ref{rem:PRDS}.
  \begin{theorem}\label{th:BH}
  If the null p-values are iid uniform on $[0,1]$ and independent of the non-null p-values, then 
 BH procedure at level $\alpha\in (0,1) $ has FDR  equal to $\alpha K_0/K$.
 \end{theorem}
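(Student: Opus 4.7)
The plan is to prove the statement by a reverse-time martingale argument combined with the optional stopping theorem, working conditionally on the non-null p-values. The conditioning makes the non-null contributions to the BH threshold deterministic while leaving the null p-values iid uniform on $[0,1]$.

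The key object is the count process $V(t):=\sum_{i\in\mathcal N}\id\{P_i\le t\}$. First I would verify that $M_t:=V(t)/t$ is a martingale on $t\in(0,1]$ with respect to the reverse-time filtration $\FFF_t:=\sigma(V(u):u\ge t)$. The computation rests on the elementary fact that, given $V(s)=n$ and $s>t$, the $n$ null p-values lying below $s$ are iid uniform on $[0,s]$, so $V(t)\mid V(s)\sim \mathrm{Binomial}(V(s),t/s)$ and hence $\E[M_t\mid \FFF_s]=M_s$. Next, I would argue that the BH threshold $\tau:=\alpha(k^*\vee 1)/K$ is a stopping time for this reverse filtration: $k^*$ is the largest $k\in\mathcal K$ with $|\{j:P_j\le\alpha k/K\}|\ge k$, so scanning $k$ from $K$ downward this is the first hitting time of the criterion, and the criterion at threshold $\alpha k/K$ is $\FFF_{\alpha k/K}$-measurable once the non-null p-values are fixed. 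Optional stopping then gives $\E[M_\tau]=\E[M_\alpha]=K_0$.

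To conclude, I would match $M_\tau$ to the FDP. On $\{k^*\ge 1\}$, $M_\tau=V(\alpha k^*/K)/(\alpha k^*/K)=(K/\alpha)\cdot F_{\cD}/k^*$, and on $\{k^*=0\}$ the definition of $k^*$ forces all p-values to exceed $\alpha/K$, so $V(\alpha/K)=0$ and $M_\tau=0$. In both cases $M_\tau=(K/\alpha)\cdot F_{\cD}/(R_{\cD}\vee 1)$, and taking expectations conditionally on the non-null p-values and then integrating out yields $\mathrm{FDR}_{\cD}=\alpha K_0/K$. The only subtle point I anticipate is the careful verification of the reverse-time stopping property together with the edge case $k^*=0$; the martingale identity itself is a one-line consequence of the uniform order-statistic structure.
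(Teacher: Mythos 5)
Your proposal is correct and takes essentially the same route as the paper's first proof: a backward martingale $V(t)/t$ built from the null count process, stopped at the data-dependent BH threshold via the optional stopping theorem, with $\E[M_\tau]=K_0$ converting directly into $\mathrm{FDR}=\alpha K_0/K$. The only cosmetic differences are that you condition on the non-null p-values and work on the discrete grid $\alpha k/K$, whereas the paper puts all indicator processes into the reverse filtration and uses the continuous threshold $t_\alpha=\sup\{t\in[0,1]: Kt\le\alpha R(t)\}$.
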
   
 \begin{proof}[First proof of Theorem \ref{th:BH}]
 Let $\cD$ be the BH procedure at level $\alpha$.  
 For $t\in [0,1]$, 
 let $ F(t)=|\{k\in \mathcal N:P_k \le t\} | $, $R(t) =|\{k\in \mathcal K:P_k \le t\}|\vee 1,$ and 
 $$
 t_\alpha = \sup \{t\in [0,1]:{Kt} \le \alpha R(t)\}.
 $$
Since $R$ is upper semicontinuous, we know $K t_\alpha =\alpha R(t_\alpha)$. 
Take $k\in \mathcal K$. If   $P_k\le t_\alpha$,
then there exists   $\ell \ge  R(P_k)$ 
such that $
{Kp_{(\ell)}}/{\ell} \le \alpha, 
$
which means  $\ell\le k^*$, and hence $H_k$ is rejected by the BH procedure. 
If $P_k > t_\alpha$, then 
 ${KP_k} > \alpha{R(P_k)} ,$ 
 and hence $H_k$ is not rejected by the BH procedure. 
To summarize,    each $H_k$ is rejected by the BH procedure if and only if $P_k\le t_\alpha$.  

For $t\in [0,1]$, let
 $$ 
\mathcal F_t =\sigma \{(\id_{\{P_1\le s\}},\dots, \id_{\{P_K\le s\}}): s
\in [ t,1]\}.
 $$
Note that for  $k\in \mathcal N$ and $s\le t$, $$
\E[\id_{\{P_k\le s\}} \mid \mathcal F_t] =\E[\id_{\{P_k\le s\}} \mid \id_{\{P_k\le t\}}] = \frac{s}{t}\id_{\{P_k\le t\}}.
$$ 
Hence, for $s\le t$, we have $\E[F(s)|\mathcal F_t] =sF(t)/t$, 
which implies that $ t\mapsto F(t)/t$ is a backward martingale.
Note that  $t_\alpha$ is a stopping time with respect to the filtration  $(\mathcal F_t)_{t\in [0,1]}$. 
 The optional stopping theorem gives
 $\E[F(t_\alpha)/t_\alpha] = \E[F(1)] =F(1) = K_0$. 
 Hence, using $F_\cD = F(t_\alpha)$ and  $R_\cD =  R(t_\alpha)$,
   $$
\E\left[\frac{F_{\cD}}{R_{\cD} }\right] =
\E\left[\frac{ F(t_\alpha) }{R(t_\alpha)}\right]   
=\frac{\alpha}{K}
\E\left[\frac{F(t_\alpha)}{t_\alpha}\right] 
= \frac{ K_0  }{K  }\alpha,
$$
and this completes the proof.  
 \end{proof}

 \begin{proof}[Second proof of Theorem \ref{th:BH}] 
 Let $\alpha_r= \alpha r/K$ for $r\in \mathcal K$.
We can write
\begin{align}\label{eq:rep1}
\E\left[\frac{F_{\cD  }}{R_{\cD }   }\right]
& 
=\E\left[ \frac{ \sum_{k\in \mathcal N} \id_{\{P_k\le \alpha_{R_\cD}\}}}{R_\cD}\right]
  =\sum_{k\in \mathcal N} \sum_{r=1}^K \frac {1}{r} \E\left[   \id_{\{P_k\le \alpha_{r}\}}\id_{\{R_\cD=r\}}\right]  .
\end{align} 
For $k\in \mathcal N$, 
let $R_k$ be the number of rejection from the BH procedure if it is applied to $\mathbf P$ with $P_k$ replaced by $0$. 
Note that $\{ P_k\le \alpha_{r},~R_\cD=r\} =\{ P_k\le \alpha_{r},~ R_k=r\}$ for each $k,r$.
 Hence, we have 
 $$
 \E[\id_{\{P_k\le \alpha_{r}\}}   \id_{\{R_\cD = r\}}] 
   = 
  \E[\id_{\{P_k\le \alpha_{ r }\}}   \id_{\{  R_k  = r\}}].
 $$
 For $k\in \mathcal N$, 
independence between $P_k$  and  $(P_j)_{j\in \mathcal K\setminus \{k\}}$ implies that 
 $P_k$ and $R_k$ are independent. Hence,
\begin{align}
 \E[\id_{\{P_k\le \alpha_{r}\}}   \id_{\{R_k = r\}}] 
 & =\p(P_k\le \alpha_{r}) \p(R_j=r) = \frac{\alpha  r }{K} \p(R_k=r).\label{eq:independent}
\end{align}
Putting this into \eqref{eq:rep1}, we get
 \begin{align}
 \label{eq:rep3}
\E\left[\frac{F_{\cD  }}{R_{\cD }   }\right] = \frac{\alpha}{K} \sum_{k\in \mathcal N} \sum_{r=1}^K  \p(R_k=r)= \frac{K_0 \alpha}{K},
\end{align} and this completes the proof. 
 \end{proof}

    \section{The BH procedure for PRDS p-values}
    \label{sec:3}

In what follows, inequalities should be
interpreted component-wise when applied to vectors, and terms like ``increasing" or ``decreasing" are in the non-strict sense. \begin{definition}\label{def:PRD}
  A set $A\subseteq \R^K$ is said to be \emph{increasing} 
if $\mathbf x\in A$ implies $\mathbf y\in A$ for  all $\mathbf y\ge \mathbf x$. 
 The p-values $P_1,\dots,P_K$ satisfy \emph{positive regression dependence on the subset $\mathcal N$ (PRDS)} if for any null index $k\in \mathcal N$ and   increasing set $A  \subseteq  \R^K$, the
function $x\mapsto \p(\mathbf P\in A\mid P_k\le x)$ is increasing.
\end{definition}  
This version of PRDS is used by \cite[Section 4]{FDR09} which is weaker than the original one used in \cite{BY01}, where ``$P_k \leq x$" in Definition \ref{def:PRD} is replaced by ``$P_k=x$"; see also Lemma 1 of \cite{RBWJ19}.     
Below, we present a proof of  the FDR guarantee of the BH procedure for PRDS p-values.  This proof is found in \cite{FDR09}. 
  \begin{theorem}\label{th:BH-PRDS} 
 If   the p-values satisfy PRDS, then 
the  BH procedure at level $\alpha $ has FDR at most $\alpha K_0/K$.
 \end{theorem}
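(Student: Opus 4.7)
My plan is to mimic the second proof of Theorem~\ref{th:BH}, keeping its algebraic decomposition but replacing the use of independence with a telescoping argument driven by PRDS.

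First, I would reuse the decomposition in \eqref{eq:rep1} to write
\[
\E\left[\frac{F_\cD}{R_\cD\vee 1}\right] = \sum_{k\in\mathcal N}\sum_{r=1}^K \frac{1}{r}\,\p(P_k \le \alpha_r,\, R_\cD = r),
\]
where $\alpha_r=\alpha r/K$. Since $P_k$ is a null p-variable, $\p(P_k\le\alpha_r)\le \alpha_r$, so each summand is bounded by $(\alpha/K)\,\p(R_\cD=r\mid P_k\le\alpha_r)$ (terms for which the conditioning event has probability zero simply vanish). It therefore suffices to show that
\[
\sum_{r=1}^K \p(R_\cD=r\mid P_k\le \alpha_r) \le 1 \qquad\text{for every }k\in\mathcal N.
\]

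To establish that bound I would use two monotonicities. The first is structural: increasing any coordinate of $\mathbf P$ can only shrink the set of BH rejections, so $R_\cD$ is a nonincreasing function of $\mathbf P$; hence $\{R_\cD\ge r\}$ is a decreasing set in $[0,\infty)^K$ and its complement $\{R_\cD\le r-1\}$ is increasing in the sense of Definition~\ref{def:PRD}. The second is PRDS applied to that increasing set, giving that $x\mapsto \p(R_\cD\ge r\mid P_k\le x)$ is decreasing in $x$. In particular,
\[
\p(R_\cD\ge r+1\mid P_k\le \alpha_r) \ge \p(R_\cD\ge r+1\mid P_k\le \alpha_{r+1}).
\]
Writing $\p(R_\cD=r\mid P_k\le\alpha_r)=\p(R_\cD\ge r\mid P_k\le\alpha_r)-\p(R_\cD\ge r+1\mid P_k\le\alpha_r)$ and substituting the above inequality makes the sum over $r$ telescope to at most $\p(R_\cD\ge 1\mid P_k\le\alpha_1)\le 1$, which concludes the argument.

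The main obstacle I anticipate is spotting the right pairing between the telescoping indices and the conditioning events, i.e.\ realising that one must upper-bound $\p(R_\cD\ge r+1\mid P_k\le\alpha_r)$ by shifting the conditioning to $P_k\le\alpha_{r+1}$ so that adjacent terms collapse; once that pairing is in place, the rest of the computation is identical to the independent case. A secondary point that deserves care is checking the coordinatewise monotonicity of $R_\cD$ directly from the definition of $k^*$, since this is what converts $\{R_\cD\ge r\}$ into an event of the form required by Definition~\ref{def:PRD}.
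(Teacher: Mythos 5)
Your proposal is correct and is essentially identical to the paper's own proof: the same decomposition \eqref{eq:rep1}, the same bound $\p(P_k\le\alpha_r)\le\alpha_r$, and the same telescoping via the PRDS inequality $\p(R_\cD\ge r+1\mid P_k\le\alpha_r)\ge \p(R_\cD\ge r+1\mid P_k\le\alpha_{r+1})$, which the paper phrases with the shorthand $\beta_{k,r}=\p(R_\cD\ge r\mid P_k\le\alpha_r)$.
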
   
 \begin{proof}
 Let $\cD$ be the BH procedure at level $\alpha$.  
 Write $\alpha_r=\alpha r  /K$   and $\beta_{k,r} = \p( R_{\cD} \ge  r \mid  P_k \le \alpha_r) $ for $r, k\in \mathcal K$, and set $\beta_{k,K+1}=0$.
By noting that $  R_{\cD} $ is a  decreasing function of the p-values,   the PRDS property gives, for $k\in \mathcal N$ and $r\in\mathcal K$,
\begin{align}\label{eq:ineq1}
  \p( R_{\cD} \ge  r +1\mid  P_k \le \alpha_r)  \ge \p( R_{\cD} \ge  r +1\mid  P_k \le \alpha_{r+1})  =\beta_{k,r+1},
 \end{align}
 which leads to
 \begin{align*}   \beta_{k,r} - \beta_{k,r+1}   
&  \ge  \p( R_{\cD} \ge  r \mid  P_k \le \alpha_r) - \p( R_{\cD} \ge  r +1\mid  P_k \le \alpha_r) 
 \\ & =  \p( R_{\cD} = r \mid  P_k \le \alpha_r)  .
\end{align*} 
Using this inequality and \eqref{eq:rep1}, we get 
 \begin{align} 
\E\left[\frac{F_{\cD}}{R_{\cD} }\right]  &=
\sum_{k\in \mathcal N}  \sum_{r=1}^K \frac{1}{r} \p(P_k \le \alpha_r) \p( R_{\cD } = r \mid  P_k \le \alpha_r)  \notag 
\\& \le 
\sum_{k\in \mathcal N}  \sum_{r=1}^K \frac{\alpha }{ K }\p( R_{\cD } = r \mid  P_k \le \alpha_r) \label{eq:ineq2} 
\\& \le 
\sum_{k\in \mathcal N}  \sum_{r=1}^K \frac{\alpha }{ K } (\beta_{k,r} - \beta_{k,r+1}   ) \notag  =
\sum_{k\in \mathcal N}   \frac{\alpha }{ K } \beta_{k,1}   = \frac{\alpha K_0 }{ K}, 
\end{align} 
where the inequality \eqref{eq:ineq2} follows from $\p(P_k \le \alpha_r)\le \alpha_r$, and the last equality follows from $\beta_{k,1}=1$.
 \end{proof}
 \begin{remark}\label{rem:PRDS}
 In the proof of Theorem \ref{th:BH-PRDS}, the inequality \eqref{eq:ineq1} 
 holds  as an equality if $P_r$ is independent of $(P_k)_{k\in \mathcal K\setminus \{r\}}$,
 and  the inequality \eqref{eq:ineq2}   holds  as an equality  if $P_k$ is uniformly distributed. Therefore, this argument also proves Theorem \ref{th:BH}. 
See also   \cite{RBWJ19}  for a superuniformity lemma which becomes useful when showing other FDR statements for PRDS p-values.
 \end{remark}

Next, we mention the Simes inequality as a corollary of Theorem \ref{th:BH}-\ref{th:BH-PRDS}.
 Define the function $S_K:[0,\infty)^K\to [0,\infty)$ of \cite{S86}  as 
$$
  S_K(p_1,\dots,p_k)=\bigwedge_{k=1}^K\frac {K }{k}p_{(k)},
$$
where $p_{(k)}$ is the $k$-th smallest order statistic of $p_1,\dots,p_K$.  
The Simes function is closely linked to the BH procedure.  In case $\mathcal N=\mathcal K$, that is, the global null, any rejection is a false discovery, and the FDP is $1$ as soon as 
there  is any rejection. Therefore,  the FDR of the BH procedure $\cD$ at level $\alpha$ is equal to   $\p(\cD  \ne \varnothing)$, and  $\cD \ne \varnothing $ is equivalent to 
$S_K(\mathbf P) \le \alpha $. 
The Simes inequality, shown by \cite{S86} for the iid case and \cite{S98} for a notion  of positive dependence, follows directly from Theorems \ref{th:BH}-\ref{th:BH-PRDS} and the above observation in the case of PRDS p-values.
\begin{corollary}\label{coro:simes}
Suppose $\mathcal N=\mathcal K$. If the p-values are PRDS, then
$$\p( S_K(\mathbf P) \le \alpha ) \le \alpha \mbox{~~~~for all $\alpha \in [0,1]$.}$$
If the p-values are iid uniform on $[0,1]$, then $S_K(\mathbf P)$ is also uniform on $[0,1]$.
\end{corollary}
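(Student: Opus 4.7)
The plan is to reduce the corollary directly to Theorems \ref{th:BH} and \ref{th:BH-PRDS} by exploiting the two observations the author already flagged in the paragraph immediately preceding the statement: under the global null, the FDR of the BH procedure coincides with the probability of making \emph{any} rejection, and that event is precisely $\{S_K(\mathbf P) \le \alpha\}$.

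Concretely, the first step is to verify the set identity $\{\cD \ne \varnothing\} = \{S_K(\mathbf P) \le \alpha\}$, where $\cD$ is the BH procedure at level $\alpha$. By definition, $\cD \ne \varnothing$ iff the set $\{k \in \mathcal K : K p_{(k)}/k \le \alpha\}$ is nonempty, which is equivalent to $\min_{k \in \mathcal K} K p_{(k)}/k \le \alpha$, i.e., $S_K(\mathbf P) \le \alpha$. The second step is to observe that when $\mathcal N = \mathcal K$ one has $F_\cD = R_\cD$, so
$$
\frac{F_\cD}{R_\cD \vee 1} = \id_{\{\cD \ne \varnothing\}},
$$
and taking expectation gives $\mathrm{FDR}_\cD = \p(\cD \ne \varnothing) = \p(S_K(\mathbf P) \le \alpha)$. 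The third step is to invoke Theorem \ref{th:BH-PRDS} with $K_0 = K$ in the PRDS case, which immediately yields $\p(S_K(\mathbf P) \le \alpha) \le \alpha$, and Theorem \ref{th:BH} in the iid uniform case, which upgrades the bound to equality: $\p(S_K(\mathbf P) \le \alpha) = \alpha$ for every $\alpha \in (0,1)$. Since $S_K(\mathbf P) \in [0,1]$, this identifies the CDF of $S_K(\mathbf P)$ as the identity on $[0,1]$, i.e., $S_K(\mathbf P)$ is uniform.

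There is essentially no analytic obstacle here; the only thing to watch is the edge range $\alpha \in \{0,1\}$ (the two theorems are stated for $\alpha \in (0,1)$). The case $\alpha = 1$ is trivial since $S_K(\mathbf P) \le p_{(K)} \le 1$, and the case $\alpha = 0$ follows from $\p(P_k = 0) = 0$ for each null $P_k$ (a consequence of the p-variable property), so both bounds extend to the closed interval by monotonicity of the CDF. The main conceptual step, if any, is the equivalence $\cD \ne \varnothing \iff S_K(\mathbf P) \le \alpha$, which is a one-line unfolding of the BH definition.
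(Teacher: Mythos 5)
Your proposal is correct and follows essentially the same route as the paper, which proves the corollary via the observations that under the global null the FDP equals $\id_{\{\cD\ne\varnothing\}}$ and that $\cD\ne\varnothing$ is equivalent to $S_K(\mathbf P)\le\alpha$, then invokes Theorems \ref{th:BH} and \ref{th:BH-PRDS}. Your extra care with the endpoints $\alpha\in\{0,1\}$ is a harmless and correct addition.
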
 
The last statement on uniformity  in Corollary \ref{coro:simes} was originally shown by \cite{S86} using a concise proof by induction.

\section{The e-BH procedure for arbitrary e-values}
\label{sec:4}
As introduced by \cite{VW21},  
an \emph{e-variable} $E$ is a $[0,\infty]$-valued random variable satisfying $\E[E]\le1$. 
E-variables are often obtained from stopping an \emph{e-process} $(E_t)_{t \geq 0}$, which is a nonnegative stochastic process adapted to a pre-specified filtration such that $\mathbb{E}[E_\tau] \leq 1$ for any stopping time $\tau$. 
In the setting of e-values, for each $k\in \mathcal K$, 
  $H_k$ is  associated with e-value $e_k$, which is a realization of a random variable $E_k$ being an e-variable if $k\in \mathcal N$, and an e-testing procedure $\cD: [0,\infty]^K\to 2^{\mathcal K}$ takes these e-values as input.

For $k\in \mathcal K$, let $e_{[k]}$ be the $k$-th order statistic of $e_1,\dots,e_K$, sorted from {the largest to the smallest} so that $e_{[1]}$ is the largest e-value. 
The e-BH procedure at level $\alpha \in (0,1)$, proposed by \cite{WR22}, rejects hypotheses with the largest $k^*$ e-values,
where 
 $$
k^*=\max\left\{k\in \mathcal K: \frac{k e_{[k]}}{K} \ge \frac{1}{\alpha}\right\}.
 $$
 In other words, the e-BH procedure is equivalent to the BH procedure applied to $(e_1^{-1},\dots,e_K^{-1})$. 
  
Below we present a simple proof of the FDR guarantee of the e-BH procedure. \cite{WR22} also considered the e-BH procedure on boosted e-values which we omit here.
\begin{theorem}\label{th:e-BH}
For arbitrary e-values, the e-BH procedure at level $\alpha\in (0,1) $ has FDR at most $\alpha K_0/K$.
\end{theorem}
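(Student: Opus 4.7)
The plan is to exploit the explicit threshold built into the e-BH procedure: whenever a hypothesis $H_k$ is rejected, its e-value must be at least $K/(\alpha R_\cD)$. This deterministic bound lets us convert the FDP into a linear functional of the null e-values, at which point $\E[E_k]\le 1$ for $k\in\mathcal N$ does all the remaining work, with no dependence assumption needed.

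Concretely, I would first verify the key observation: if $k\in\cD$, then by the definition of $k^*$ the rejection threshold satisfies $E_k \ge e_{[R_\cD]} \ge K/(\alpha R_\cD)$, hence $\mathds{1}_{\{k\in\cD\}}\le \alpha R_\cD E_k/K$. This is a pointwise inequality that holds on the event $\{R_\cD\ge 1\}$; on $\{R_\cD=0\}$ the FDP is zero by convention, so it can be ignored.

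Next, I would plug this into the FDP:
\begin{align*}
\frac{F_\cD}{R_\cD\vee 1}
= \frac{1}{R_\cD\vee 1}\sum_{k\in\mathcal N}\mathds{1}_{\{k\in\cD\}}
\le \frac{1}{R_\cD\vee 1}\sum_{k\in\mathcal N}\frac{\alpha R_\cD E_k}{K}
\le \frac{\alpha}{K}\sum_{k\in\mathcal N}E_k,
\end{align*}
where the last inequality is an equality on $\{R_\cD\ge 1\}$ and both sides vanish on $\{R_\cD=0\}$. Taking expectations and using $\E[E_k]\le 1$ for each $k\in\mathcal N$ yields
$$\mathrm{FDR}_\cD \le \frac{\alpha}{K}\sum_{k\in\mathcal N}\E[E_k]\le \frac{\alpha K_0}{K}.$$

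There is no real obstacle here: the only subtle point is the pointwise threshold inequality, which requires one to be careful that $e_{[R_\cD]}\ge K/(\alpha R_\cD)$ holds by the maximality in the definition of $k^*$, and to treat the empty-rejection case separately. Once that is in place, the proof is a one-line calculation. The striking feature, worth highlighting in a remark, is that no assumption on the joint distribution of $(E_1,\dots,E_K)$ is used, in contrast to the p-value setting where independence or PRDS is essential.
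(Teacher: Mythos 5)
Your proposal is correct and follows essentially the same route as the paper's proof: it isolates the self-consistency property $E_k \ge K/(\alpha R_{\cD})$ for rejected $k$, uses it to bound the FDP pointwise by $(\alpha/K)\sum_{k\in\mathcal N}E_k$, and finishes with $\E[E_k]\le 1$. No changes needed.
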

 
\begin{proof} 
Let $\cD$ be the e-BH procedure at level $\alpha$. By definition, the   e-BH procedure $\cD$ applied to arbitrary e-values $(E_1,\dots,E_K)$ satisfies the following property
\begin{equation}
E_k \ge \frac{K}{\alpha R_{\cD}  } \mbox{~~~if  $k\in \cD$}. \label{eq:sc} 
\end{equation}
Using \eqref{eq:sc}, the FDP of $\cD$ satisfies 
\begin{align*} 
 \frac{F_{\cD}}{R_{\cD} }   =
 \frac{ |\cD \cap \mathcal N |  }{R_{\cD} \vee 1}  
 & =\sum_{k\in \mathcal N}  \frac{ \id_{\{k\in \cD \}}  }{R_{\cD} \vee 1}  
  \le \sum_{k\in \mathcal N}  \frac{ \id_{\{k\in \cD \}} \alpha E_k   }{K}   \le \sum_{k\in \mathcal N}   \frac{\alpha   E_k  }{K} , 
\end{align*} 
where the first inequality  is due to \eqref{eq:sc}. 
As $\E[E_k]\le 1$ for $k\in \mathcal N$, we have
$$
\E\left[ \frac{F_{\cD}}{R_{\cD} }  \right]  \le   \sum_{k\in \mathcal N}  \E\left[ \frac{\alpha   E_k  }{K}   \right] \le \frac{\alpha K_0}{K},
$$
thus the desired FDR guarantee. 
\end{proof}

\begin{remark}
Any e-testing procedure $\cD$   satisfying \eqref{eq:sc}  is said to be self-consistent by \cite{WR22}. The   e-BH procedure dominates all other self-consistent e-testing procedures.
From the proof of Theorem \ref{th:e-BH}, any self-consistent e-testing procedure  has FDR at most $\alpha K_0/K$ for arbitrary   e-values. 
\end{remark}

\section{The  BY correction for arbitrary p-values}
\label{sec:5}
 
 The next theorem concerns the FDR guarantee of the BH procedure for arbitrary p-values.
 As shown by \cite{BY01},  in the most adversarial scenario,
 the BH procedure needs to pay the price of a factor of $\ell_K$, where 
\[
\ell_K=\sum_{k=1}^K \frac 1k \approx \log K.
 \] 
 We provide two simple proofs. The first one is similar to the original proof of \cite{BY01}. 
 The second proof is based on the FDR of  the e-BH procedure in Theorem \ref{th:e-BH}; a similar argument, without explicitly using e-values, is given by \cite{BR08}.
   \begin{theorem}\label{th:BY} 
  For arbitrary p-values,
the  BH procedure at level $\alpha \in (0,1) $ has FDR at most $\ell_K\alpha K_0/K$.
 \end{theorem}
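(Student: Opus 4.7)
My plan is to derive Theorem \ref{th:BY} as a direct consequence of Theorem \ref{th:e-BH}, by calibrating each p-value into an e-value so that the BH procedure on $(P_1, \dots, P_K)$ becomes a self-consistent e-testing procedure for the calibrated e-values at the inflated level $\alpha \ell_K$.

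For each $k \in \mathcal K$, I would set $J_k = \min\{j \in \mathcal K : P_k \le \alpha j/K\}$, with the convention $J_k = \infty$ (and $1/J_k = 0$) if $P_k > \alpha$, and define the calibrated e-value
\[
E_k = \frac{K}{\alpha \ell_K J_k}.
\]
The first technical step is to verify that $E_k$ is an e-variable whenever $P_k$ is a p-variable. I would do this by Abel summation: write $1/J_k = (1/K)\id_{\{J_k \le K\}} + \sum_{j=1}^{K-1}(1/j - 1/(j+1))\id_{\{J_k \le j\}}$, substitute $\{J_k \le j\} = \{P_k \le \alpha j/K\}$, apply the p-variable bound $\p(P_k \le \alpha j/K) \le \alpha j/K$ termwise, and use $1 + \sum_{j=1}^{K-1} 1/(j+1) = \ell_K$ to conclude $\E[E_k] \le 1$.

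Next, I would verify that the BH procedure $\cD$ at level $\alpha$ is self-consistent at level $\alpha \ell_K$ with respect to $(E_1, \dots, E_K)$ in the sense of~\eqref{eq:sc}. This is immediate: if $k \in \cD$ then $P_k \le \alpha R_\cD/K$, so $J_k \le R_\cD$, and hence $E_k = K/(\alpha \ell_K J_k) \ge K/((\alpha \ell_K) R_\cD)$. Invoking the observation in the remark following Theorem \ref{th:e-BH}, that any self-consistent procedure at level $\beta$ for arbitrary e-values has FDR at most $\beta K_0/K$, with $\beta = \alpha \ell_K$ yields the target bound $\alpha \ell_K K_0 / K$.

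The main obstacle is not the algebra but finding the right calibrator in the first place. Its jumps must be placed at the BH thresholds $\alpha j/K$ so that self-consistency is automatic, while the harmonic weights $1/j$ and the normalizer $\ell_K$ are forced by the requirement that the calibrator integrate to at most $1$ against the uniform distribution on $[0,1]$. Once the calibrator is guessed, the e-value check and the self-consistency check are short and mirror each other, each collapsing to the identity $\sum_{j=1}^K 1/j = \ell_K$.
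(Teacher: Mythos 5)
Your proposal is correct and is essentially the paper's second proof of Theorem \ref{th:BY}: your $J_k$ is exactly $\lceil K P_k/\alpha\rceil$, so your calibrated $E_k$ coincides with $\phi(P_k)$ in \eqref{eq:transform}, and the reduction to Theorem \ref{th:e-BH} via self-consistency at level $\alpha\ell_K$ is the same argument. Your Abel-summation check that $\E[E_k]\le 1$ directly from the p-variable property is a slightly more explicit version of the paper's verification that $\int_0^1\phi(p)\,\dd p=1$, but this is a cosmetic difference.
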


 \begin{proof}[First proof of Theorem \ref{th:BY}]
 Note that \eqref{eq:rep1} can be rearranged to
\begin{align}\label{eq:rep2}
\E\left[\frac{F_{\cD  }}{R_{\cD }   }\right]& = 
\sum_{k\in \mathcal N}   \left( \sum_{r=1}^K     \frac{  \E[ \id_{\{P_k\le \alpha_{R_\cD}\}}  \id_{\{R_\cD \le r\}}] }{r (r+1) }   + \frac{\E [  \id_{\{P_k\le \alpha_{R_\cD}\}}  ]}{K+1}
\right) . 
\end{align} 
For $k\in \mathcal N$, 
 $$
 \E\left [\id_{\{P_k\le \alpha_{R_\cD}\}}   \id_{\{R_\cD \le r\}}\right]
\le  \E[\id_{\{P_k\le \alpha_r\}}   \id_{\{R_\cD \le r\}} ] \le  \E[\id_{\{P_k\le \alpha_r\}}     ] \le \frac{r  }{K}\alpha
, $$
and hence \eqref{eq:rep2} leads to
\begin{align*}
\E\left[\frac{F_{\cD  }}{R_{\cD }   }\right]&\le  
  \sum_{r=1}^K     \frac{ K_0 \alpha }{ (r+1) K }   + \frac{ K_0 \alpha}{K+1} 
  \\
&  =  \sum_{r=1}^K     \frac{ K_0 \alpha }{ (r+1) K }   + \frac{ K_0 \alpha}{K}   -\frac{ K_0 \alpha}{K(K+1)}   = \sum_{r=1}^K  \frac{ 1 }{r   } \frac{ K_0 \alpha }{  K }  =\frac{ \ell _K K_0  }{  K }\alpha  ,
\end{align*}  
and this completes the proof. 
 \end{proof}

 \begin{proof}[Second proof of Theorem \ref{th:BY}]
 Denote by $\alpha'=\alpha \ell_K$.
 Define the decreasing function $\phi: [0,\infty) \to [0,\alpha' K]$ by
\begin{align}\label{eq:transform}
 \phi(p) =  \frac{  K }{ \alpha'  \lceil  K p /\alpha \rceil}\id_{\{ p\le \alpha\}}\mbox{~~with }\phi(0)={ K}/{\alpha' }.
 \end{align}
 Let $E_k=\phi(P_k)$ for $k\in \mathcal K$. 
It is straightforward to check that, for  
$\int_0^1 \phi(p)\d p=1$. 
Therefore, $E_k $ is an e-value for $k\in \mathcal N$. 
By Theorem \ref{th:e-BH}, applying the e-BH procedure at level $\alpha'$ to $(E_1,\dots,E_K)$ has an FDR guarantee of $\alpha' K_0/K$. 
Note that 
$$\frac{k E_{[k]}}{K} \ge \frac{1}{\alpha'}
~~\Longleftrightarrow~~\frac{  k }{ \lceil  K P_{(k)} /\alpha \rceil}  \ge 1 ~~\Longleftrightarrow~~ \frac{K P_{(k)}}{k} \le \alpha.
$$
Hence, the e-BH procedure  at level $\alpha'$  applied to $(E_1,\dots,E_K)$  is equivalent to the BH procedure applied to $(P_1,\dots,P_K)$ at level $\alpha$. 
This yields the FDR guarantee  $ \alpha' K_0/K$  of the BH procedure.   
 \end{proof}

The second proof of Theorem \ref{th:BY} can be generalized to many other procedures, as long as they can be converted to an e-BH procedure via a transform similar to \eqref{eq:transform}.

In a similar way to Corollary \ref{coro:simes}, the inequality of \cite{H83} for arbitrary p-values follows from Theorem \ref{th:BY}.
\begin{corollary}\label{coro:hommel}
Suppose $\mathcal N=\mathcal K$. For arbitrary p-values, we have
$$\p( S_K(\mathbf P) \le \alpha ) \le \ell_K \alpha \mbox{~~~~for all $\alpha \in [0,1]$.}$$ 
\end{corollary}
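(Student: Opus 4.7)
The plan is to derive Corollary \ref{coro:hommel} from Theorem \ref{th:BY} via exactly the same reduction that yields Corollary \ref{coro:simes} from Theorems \ref{th:BH}--\ref{th:BH-PRDS}. First, I would specialize Theorem \ref{th:BY} to the global null $\mathcal N = \mathcal K$, so that $K_0 = K$ and every rejected hypothesis is automatically a false discovery. Under this specialization, $F_\cD = R_\cD$ whenever $R_\cD \ge 1$, and the FDP $F_\cD / (R_\cD \vee 1)$ collapses to the indicator $\id_{\{\cD \neq \varnothing\}}$, whose expectation equals $\p(\cD \neq \varnothing)$.

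The second step is to translate $\{\cD \neq \varnothing\}$ into an event involving $S_K(\mathbf P)$. As already observed in the paragraph preceding Corollary \ref{coro:simes}, the BH procedure at level $\alpha$ rejects at least one hypothesis precisely when $k^* \ge 1$, i.e., when there exists some $k \in \mathcal K$ such that $K p_{(k)}/k \le \alpha$; by the definition of the Simes function this is exactly $S_K(\mathbf P) \le \alpha$. Combining this equivalence with the previous step and the FDR bound of Theorem \ref{th:BY} at level $\alpha$ for arbitrary p-values gives
\[
\p(S_K(\mathbf P) \le \alpha) \;=\; \mathrm{FDR}_\cD \;\le\; \frac{\ell_K \alpha K_0}{K} \;=\; \ell_K \alpha,
\]
which is the Hommel inequality.

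There is no substantive obstacle here: all the nontrivial analytical work has been carried out in Theorem \ref{th:BY}, and the reduction from the global-null FDR to a tail bound on $S_K(\mathbf P)$ is the same bookkeeping step used in the PRDS case. The only thing worth double-checking is that both reductions (FDP to indicator, and non-empty rejection set to $\{S_K(\mathbf P) \le \alpha\}$) are distribution-free, which is automatic since Theorem \ref{th:BY} imposes no assumption on the joint law of $\mathbf P$.
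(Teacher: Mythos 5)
Your proposal is correct and is exactly the reduction the paper intends: specialize Theorem \ref{th:BY} to the global null so the FDR becomes $\p(\cD\neq\varnothing)$, and identify $\{\cD\neq\varnothing\}$ with $\{S_K(\mathbf P)\le\alpha\}$ as in the discussion preceding Corollary \ref{coro:simes}. No differences from the paper's argument worth noting.
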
 
As observed by \cite{H83} and \cite{S86}, the inequalities in Corollaries \ref{coro:simes} and \ref{coro:hommel} cannot be improved to obtain a smaller upper bound under their respective assumptions. Similarly, the FDR upper bound in Theorems \ref{th:BH}-\ref{th:BY} cannot be improved in general.

\subsection*{Acknowledgements} 
I thank  Yuzo Maruyama for a useful comment on the presentation of the second proof of Theorem \ref{th:BH}.

\end{document}